\documentclass[11pt,twoside]{article}
\usepackage{amsmath}
\usepackage{amsfonts}
\usepackage{amssymb}
\usepackage{amsthm}
\usepackage{amscd}
\usepackage[english]{babel}
\usepackage[latin1]{inputenc}
\pagestyle{myheadings}

\headheight 35pt

\newtheorem{theorem}{Theorem}[section]
\newtheorem{lemma}[theorem]{Lemma}
\newtheorem{proposition}[theorem]{Proposition}
\newtheorem{corollary}[theorem]{Corollary}
\theoremstyle{definition}
\newtheorem{definition}[theorem]{Definition}

\newtheorem{remark}[theorem]{Remark}

\newcommand{\g}{g_0}
\newcommand{\m}{{M}}
\newcommand{\mo}{M_0}
\newcommand{\h}{{\mathcal{H}}}

\newcommand{\R}{{\mathbb R}}

\begin{document}

\vbox{\hbox{\small International Meeting on Differential Geometry, C\'ordoba 2010}}
\hbox{\small November 15-17, University of C\'ordoba, Spain}
\vskip 1.5truecm

%\addtolength{\oddsidemargin}{-0.5cm}
%\addtolength{\evensidemargin}{2cm}
%\setlength{\topmargin}{-1.5cm}
%\setlength{\textheight}{19cm}
%\setlength{\textwidth}{14cm}

\begin{center}
{\large{\bf Remarks on the completeness  \\ of trajectories of
accelerated particles in \\  Riemannian manifolds and plane
waves\footnote{Partially supported by Spanish Grants with FEDER
funds MTM2010-18099 (MIC\-INN) and P09-FQM-4496 (J. de Andaluc\'\i
a).}}}
\end{center}

\medskip

\centerline{\bf A.M. Candela\footnote{Partially supported by
M.I.U.R. (research funds ex 40\% and 60\%) and of the G.N.A.M.P.A.
Research Project 2011 {\it Analisi Geometrica sulle Variet\`a di
Lorentz ed Applicazioni alla Relativit\`a Generale.}}, A. Romero
and M. S\'anchez}

\thispagestyle{empty}

\markboth{A.M. Candela, A. Romero, M. S\'anchez}{Completeness of Trajectories and Plane Waves}

\bigskip

\begin{abstract}
Recently, classical results on completeness of  trajectories of
Hamiltonian systems obtained at the beginning of the seventies,
have been revisited, improved and applied to Lorentzian Geometry
\cite{CRS2012}. Our aim here is threefold: to give explicit proofs
of some technicalities in the background of the specialists, to
show that the introduced tools allow to obtain more results for
the completeness of the trajectories, and to apply these results
to the completeness of spacetimes that generalize classical plane
and pp--waves.
\end{abstract}
\smallskip

\noindent
{\footnotesize {\sl MSC2010}: 34A26, 34C40, 37C60, 35Q75, 83C35.\\
{\sl Key words and phrases}: generalized plane wave, pp--wave,
geodesic completeness, non--autonomous system,
completeness of inextensible trajectories.}

\vspace*{-2mm}

\section{Introduction}

In Classical Mechanics, one of the most venerable equations on a
(connected) Riemannian manifold $(M_0,g_0)$ is:
\[
\hspace*{3.5cm}\frac{D\dot\gamma}{dt}(t)\ =\ - \nabla^{M_0} V (\gamma(t),t)\hspace*{3.5cm}
(E_0)
\]
 where $D/dt$ denotes the covariant derivative along $\gamma$
induced by the Levi--Civita connection of $g_0$ and $\dot\gamma$
represents the velocity field along $\gamma$, while $V:M_0\times\R
\rightarrow \R$ is a smooth time--dependent potential. In fact,
when $(M_0,g_0)$ is $\R^3$, this is just Newton's second law for
forces that come from an external time-dependent potential. A
basic property that may have its solutions is  {\em completeness}
i.e. the extendability of their domain to all $\R$. At the
beginning of the seventies, some authors studied systematically
this property (see, e.g., \cite{Eb,Go,WM} or also \cite[Theorem
3.7.15]{AM}) but, essentially, they  focus only in the autonomous
case, that is, when $V(x,t)\equiv V(x)$ ($V$ is independent of
time).

Very recently, the authors have considered the completeness of the
trajectories not only for the general equation  (E$_0$) but also
for more general forces (see \cite{CRS2012}). Concretely,  $-
\nabla^{M_0} V$ was generalized to an arbitrary time-dependent
vector field $X$ and forces linearly dependent with the velocity
by means of an operator $F$, were also allowed. Nevertheless, it
is specially interesting to understand and analyze accurately the
differences between the autonomous and the non-autonomous case for
a potential. Moreover, as pointed out in \cite{CFS} (see also
\cite{CaSa}), the completeness for (E$_0$) is equivalent to the
completeness for the geodesics of a class of relativistic
spacetimes that generalizes the classical plane and pp--waves. So,
the aim of the present paper is, first, to analyze further  the
completeness in the non-autonomous case $X=- \nabla^{M_0} V$ (even
admitting the linear dependence of the force with the operator
$F$, see equation (E) below) and, then, to analyze  the
applications to generalized plane waves.

This paper is organized as follows. In Section 2, we recall the
framework for the completeness of Riemannian trajectories
(Subsection 2.1), and give a new theorem on completeness
(Subsection 2.2). The  proofs of two results are provided. The
first one is a technical comparison lemma that is commonly taken
into account in the results on completeness (Lemma
\ref{comparison}). The second one is a theorem on completeness
(Theorem \ref{G01}), obtained by developing further the techniques
in \cite{CRS2012}. In Section 3 we introduce plane wave type
spacetimes (Subsection 3.1) and explain the relation between the
problem of completeness of trajectories and the geodesic
completeness of generalized plane waves (Subsection 3.2).
Moreover, we give  further results on geodesic completeness
(Corollaries \ref{complete2}, \ref{complete3}) as a consequence of
the previous result of completeness of trajectories.

\section{Completeness of Riemannian trajectories}

\subsection{Framework}

Let $(\mo,g_0)$ be a (connected) smooth $n$--dimensional
Riemannian manifold and $V:\mo\times\R \rightarrow \R$ a given
smooth function. Taking $p\in \mo$ and $v\in T_p\mo$, there exists
a unique inextensible smooth curve $\gamma : I \to \mo$, $0\in I$,
solution of $(E_0)$ which satisfies the initial conditions
\begin{equation}\label{initial}
\gamma(0) = p,\quad \dot\gamma(0) = v.
\end{equation}
An inextensible solution of $(E_0)$ is {\sl complete} if it is
defined on the whole real line. Note that equation $(E_0)$ in the
trivial case $V\equiv 0$ is the equation of the geodesics in
$(\mo,g_0)$. Let us recall that a Riemannian manifold $(\mo,g_0)$
is {\sl geodesically complete} if any of its inextensible
geodesics is defined on $\mathbb{R}$ or, equivalently, the metric
distance induced by $g_0$ is complete.

In \cite[Theorem 2.1]{Go} Gordon proved the completeness of the
trajectories of $(E_0)$ if the potential $V$ is time--independent,
bounded from below and satisfying either $(\mo,g_0)$ is complete
or $V$ is proper (i.e., $V^{-1}(K)$ is compact in $\mo$ for any
compact $K\subset \R$). Other results in the autonomous case were
given in \cite{Eb,WM} and \cite[Theorem 3.7.15]{AM}.

Following \cite{CRS2012}, we generalize such results to the
non--autonomous case by including also the action of a (1,1)
tensor field $F$ along the natural projection $\pi : \mo \times \R
\longrightarrow \mo$, i.e., we consider the second order
differential equation
\[
\hspace*{2.9cm}\frac{D\dot\gamma}{dt}(t)\ =\ F_{(\gamma(t),t)}\
\dot\gamma(t) - \nabla^{M_0} V (\gamma(t),t).\hspace*{2cm}(E)
\]
Let us remark that the existence and uniqueness result of
inextensible solutions of $(E)$, under the same initial conditions
\eqref{initial}, remains now true, and, obviously, one has the notion of
complete inextensible trajectory of $(E)$.

Now, let us introduce some terminology in order to express natural
conditions on $F$ and $V$. Notice  that, in general, $F$ is
neither self-adjoint nor skew-adjoint with respect to $g_0$, and
denote by $S$ the self--adjoint part of $F$.
For each $t \in \R$, put
\[
\| S(t) \|\ : = \
\max\big\{\big|S_{\sup}(t)\big|,\, \big|S_{\inf}(t)\big|\big\}
\]
where
\[
S_{\sup}(t): = \sup_{\underset{\|v\|=1}{v\in T\mo}}
g\left(v,S_{(p,t)} v\right) \quad \text{and} \quad S_{\inf}(t):=
\inf_{\underset{\|v\|=1}{v\in T\mo}} g\left(v,S_{(p,t)} v\right).
\]
We say that $S$ is {\em bounded} (resp. {\em upper bounded}, {\em
lower bounded}) {\em along finite times} when, for each $T>0$,
there exists a constant $N_T$ such that
\begin{equation}\label{bf}
\|S(t)\|  \le N_T \; \hbox{(resp. $S_{\sup}(t) \le N_T$, $-S_{\inf}(t) \le N_T$)}\; \hbox{for all $t\in [-T,T]$.}
\end{equation}
Moreover, the potential $V$ is {\em bounded from below along
finite times} if there exists a continuous function $\beta_0:
\R\rightarrow \R$ such that
\begin{equation}\label{bf1}
V(p,t)\ \ge\ \beta_0(t)\quad \hbox{for all}\quad (p,t)\in \mo
\times \R.
\end{equation}

In order to investigate the completeness of the inextensible
solutions of equation $(E)$, let us recall that an integral curve
$\rho$ of a vector field on a manifold, defined on some bounded
interval $[a,b)$, $b<+\infty$, can be extended to $b$ (as an
integral curve) if and only if there exists a sequence
$\{t_n\}_n$, $t_n \to b^-$, such that $\{\rho(t_n)\}_n$ converges
\cite[Lemma 1.56]{ON}. The following technical result follows
directly from this fact and \cite[Lemma 3.1]{CRS2012}.

\begin{lemma}\label{extend}
Let $\gamma: [0,b) \to \mo$ be a solution of equation $(E)$ with
$0<b<+\infty$. The curve $\gamma$ can be extended to $b$ as a
solution of $(E)$ if and only if there exists a sequence
$\{t_n\}_n \subset [0,b)$ such that $t_n \to b^-$ and the sequence
of velocities $\{\dot\gamma(t_n)\}_n$ is convergent in the tangent
bundle $T\mo$.
\end{lemma}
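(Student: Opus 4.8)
The plan is to recast the second-order equation $(E)$ as a first-order autonomous system on the extended phase space $T\mo \times \R$, and then to combine the general extension criterion for integral curves of a vector field (\cite[Lemma 1.56]{ON}) with the correspondence established in \cite[Lemma 3.1]{CRS2012}.

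First I would construct the smooth vector field $G$ on $T\mo \times \R$ whose integral curves encode the solutions of $(E)$. To a solution $\gamma$ one associates the curve $\rho(t) := (\dot\gamma(t), t) \in T\mo \times \R$, where $\dot\gamma(t)$ is regarded as a point of $T\mo$ based at $\gamma(t)$; equation $(E)$ says precisely that $\rho$ is an integral curve of $G$, namely the geodesic spray of $g_0$ corrected by the forcing term $F_{(\gamma,t)}\dot\gamma - \nabla^{M_0}V(\gamma,t)$ and supplemented with the trivial equation $\dot t = 1$ in the last slot. Since $F$ and $V$ are smooth, $G$ is smooth, and the standard existence and uniqueness theorem for integral curves applies. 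The fact that $\gamma$ extends to $b$ as a solution of $(E)$ exactly when $\rho$ extends to $b$ as an integral curve of $G$ is the content of \cite[Lemma 3.1]{CRS2012}.

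Next I would apply \cite[Lemma 1.56]{ON} to $\rho$ on the bounded interval $[0,b)$: the integral curve $\rho$ can be extended to $b$ if and only if there is a sequence $t_n \to b^-$ such that $\{\rho(t_n)\}_n$ converges in $T\mo \times \R$. The key simplification is that, because $b < +\infty$, the last component $t_n$ automatically converges to $b$; hence $\{\rho(t_n)\}_n$ converges in $T\mo \times \R$ if and only if $\{\dot\gamma(t_n)\}_n$ converges in $T\mo$. Chaining this equivalence with the one of the previous paragraph yields exactly the stated criterion for the extendability of $\gamma$ as a solution of $(E)$.

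The only point demanding care — more a verification than a genuine obstacle — is the clean passage between the two formulations. Convergence of $\{\dot\gamma(t_n)\}_n$ in $T\mo$ already forces convergence of the base points $\{\gamma(t_n)\}_n$ in $\mo$, by continuity of the bundle projection $\pi : T\mo \to \mo$, so no separate hypothesis on the positions is needed; and conversely the limit point in $T\mo$ supplies admissible initial data at $b$ from which the solution is continued via existence and uniqueness. Everything else is a direct invocation of the two quoted lemmas.
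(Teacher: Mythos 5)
Your proposal is correct and follows exactly the route the paper intends: the paper itself derives Lemma \ref{extend} directly from the extension criterion \cite[Lemma 1.56]{ON} applied to the first--order reformulation of $(E)$ on $T\mo\times\R$ given in \cite[Lemma 3.1]{CRS2012}, which is precisely your argument. Your additional remarks (the $t$--component converges automatically since $b<+\infty$, and convergence of the velocities forces convergence of the base points) just make explicit what the paper leaves implicit.
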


Furthermore, we need also the following result (compare with
\cite[Example 2.2.H]{AM}).

\begin{lemma}[{\bf Comparison Lemma}]\label{comparison}
Let $\varphi :[a,+\infty) \to \R$ be a continuous
monotone increasing function such that
\begin{equation}\label{et1}
\varphi(s) > 0\quad\ \hbox{for all $\ s\ge a$}
\quad\hbox{and}\quad \int_a^{+\infty} \frac{ds}{\varphi(s)} =
+\infty.
\end{equation}
If a $C^1$ function $v_0=v_0(t)$ satisfies the equation
\begin{equation}\label{global}
v_0'(t)\ =\ \varphi(v_0(t))\quad \hbox{with $v_0(0) \ge a$,}
\end{equation}
and it is inextensible, then it is defined for all $t \ge 0$.

\vspace{1mm}

Furthermore, if $v :[0,b) \to \R$ is a continuous function such
that
\begin{equation}\label{stime}
\left\{\begin{array}{ll} \displaystyle a \le v(t) \le v(0) + \int_0^t
\varphi(v(s)) d s  &\hbox{for all \;
$t \in [0,b)$,}\\[1mm]
v(0) \le v_0(0),&
\end{array}\right.
\end{equation}
then $v(t) \le v_0(t)$ for all $t \in [0,b)$.
\end{lemma}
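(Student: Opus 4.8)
The plan is to reduce both assertions to a single monotone quantity built from the primitive of $1/\varphi$. Define
\[
G(u)\ :=\ \int_a^u \frac{ds}{\varphi(s)},\qquad u\ge a.
\]
Because $\varphi$ is continuous and strictly positive on $[a,+\infty)$, the function $G$ is $C^1$ and strictly increasing, with $G'(u)=1/\varphi(u)>0$; moreover $G(a)=0$ and, by the divergence hypothesis in \eqref{et1}, $\lim_{u\to+\infty}G(u)=+\infty$. Hence $G$ is a strictly increasing bijection of $[a,+\infty)$ onto $[0,+\infty)$, whose inverse $G^{-1}$ is also strictly increasing.

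For the first claim I would argue as follows. Since $\varphi>0$, equation \eqref{global} forces $v_0'(t)>0$, so $v_0$ is strictly increasing and stays in the domain $[a,+\infty)$ of $G$ (as $v_0(0)\ge a$). On the maximal interval of definition one computes $\frac{d}{dt}G(v_0(t))=v_0'(t)/\varphi(v_0(t))=1$, whence $G(v_0(t))=G(v_0(0))+t$. As $G(v_0(0))\ge 0$, the right-hand side belongs to the range $[0,+\infty)$ of $G$ for every $t\ge 0$, so $v_0(t)=G^{-1}\big(G(v_0(0))+t\big)$ defines a solution on all of $[0,+\infty)$; by inextensibility this is the maximal solution, and therefore $v_0$ is defined for all $t\ge 0$.

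For the comparison I would introduce the $C^1$ function $w(t):=v(0)+\int_0^t\varphi(v(s))\,ds$, i.e. the upper bound appearing in \eqref{stime}, so that $v(t)\le w(t)$, $w(0)=v(0)$, and $w(t)\ge v(0)\ge a$ keeps $w$ in the domain of $G$. Differentiating, $\frac{d}{dt}G(w(t))=\varphi(v(t))/\varphi(w(t))\le 1$, using that $\varphi$ is increasing together with $v(t)\le w(t)$; integrating gives $G(w(t))\le G(w(0))+t$. Combining with the identity $G(v_0(t))=G(v_0(0))+t$ from the first part and with $G(w(0))=G(v(0))\le G(v_0(0))$ (since $v(0)\le v_0(0)$ and $G$ is increasing), I obtain $G(w(t))\le G(v_0(t))$, and the strict monotonicity of $G$ yields $w(t)\le v_0(t)$. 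Hence $v(t)\le w(t)\le v_0(t)$ for all $t\in[0,b)$, as desired.

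The delicate point is that $\varphi$ is assumed only continuous and monotone, not locally Lipschitz, so the standard ODE comparison theorems---which rely on uniqueness of solutions of \eqref{global}---are not directly available. Passing to $G$ circumvents this: it converts the equation and the differential inequality into monotonicity statements for $G(v_0(t))-t$ and $G(w(t))-t$, which require only $\varphi>0$ and the monotonicity of $\varphi$. The only bookkeeping to watch is that every argument of $G$ and $\varphi$ remains in $[a,+\infty)$, which is guaranteed throughout by the positivity of $\varphi$ (forcing $v_0$ increasing) and by the lower bound $v\ge a$ in \eqref{stime}.
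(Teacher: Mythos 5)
Your proof is correct and follows essentially the same route as the paper: both solve \eqref{global} by inverting the primitive of $1/\varphi$, and both compare $v$ to $v_0$ through an auxiliary $C^1$ majorant ($w$ in your notation, $h$ in the paper's) whose derivative is controlled via the monotonicity of $\varphi$. The only cosmetic differences are that you base the majorant at $v(0)$ rather than $v_0(0)$ and conclude by strict monotonicity of $G$ where the paper argues by contradiction on the integral inequality.
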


\begin{proof} Even though this is a simple exercise,  we
prefer to give here a complete argument by completeness. If
$v_0=v_0(t)$ is a $C^1$ inextensible solution of \eqref{global} in
the interval $[0,\bar b)$, then
\begin{equation}\label{et2}
v_0(t) \ge v_0(0) \ge a\quad \hbox{for all $t\in [0,\bar b)$,}
\end{equation}
whence, for all $t\in [0,\bar b)$, $\varphi(v_0(t))(>0)$ is well
defined and $v_0$ becomes strictly monotone increasing. Thus,
dividing both the terms of \eqref{global} by $\varphi(v_0(t))$ and
integrating in $[0,t]$, $0 < t < \bar b$, we have
\[
\int_0^t\frac{v_0'(\tau)}{\varphi(v_0(\tau))}d\tau = t,
\]
hence, $v_0 = v_0(t)$ is the inverse of
\begin{equation}\label{et}
t(v_0)\ =\ \int_{v_0(0)}^{v_0} \frac{ds}{\varphi(s)},
\end{equation}
with the maximum $\bar b$ equal to
$\displaystyle\lim_{v_0\rightarrow +\infty}t(v_0)$ in \eqref{et}.
From \eqref{et1} it follows $\bar b = +\infty$.

\vspace{1mm}

Now, let $v=v(t)$, $t\in [0,b)$, be such to satisfy \eqref{stime}
and define
\[
h(t) \ =\ v_0(0) +\ \int_0^t \varphi(v(s)) ds.
\]
Clearly, $h$ is a $C^1$ function such that
\[
h(0) = v_0(0)\quad\hbox{and}\quad h'(t) = \varphi(v(t))
\quad \hbox{for all $t\in [0,b)$.}
\]
Moreover, from \eqref{stime} it follows
\begin{equation}\label{stima3}
a\ \le\ v(t) \ \le\ h(t)\quad \hbox{for all $t\in [0,b)$,}
\end{equation}
whence the monotonicity of $\varphi$ implies
\begin{equation}\label{stima2}
h'(t)\ \le\ \varphi(h(t))\quad \hbox{for all $t\in [0,b)$.}
\end{equation}
Thus, from \eqref{et1}, \eqref{global} and \eqref{stima2} we have
\[
\frac{h'(t)}{\varphi(h(t))} \ \le\ 1\ =\
\frac{v_0'(t)}{\varphi(v_0(t))} \quad \hbox{for all $t\in [0,b)$,}
\]
whence direct computations give
\begin{equation}\label{et4}
\int_{v_0(0)}^{h(t)}\frac{ds}{\varphi(s)} \ \le\
\int_{v_0(0)}^{v_0(t)}\frac{ds}{\varphi(s)}
\quad \hbox{for all $t\in [0,b)$.}
\end{equation}
Now, assume that
$\bar t \in (0,b)$ exists such that $h(\bar t) > v_0(\bar t)$.
Hence, \eqref{et1} and \eqref{et2} imply
\[
\int_{v_0(0)}^{h(\bar t)}\frac{ds}{\varphi(s)} \ >\
\int_{v_0(0)}^{v_0(\bar t)}\frac{ds}{\varphi(s)}
\]
in contradiction with \eqref{et4}. So, we have
$h(t) \le v_0(t)$ for all $t \in [0,b)$ and the proof follows from \eqref{stima3}.
\end{proof}

\subsection{Our main result on the non--autonomous problem $(E)$}

Now, we are ready to state our main result on the completeness of inextensible trajectories
of the non--autonomous problem $(E)$.

\begin{theorem}\label{G01} Let $(\mo,\g)$ be a complete Riemannian mani\-fold, $F$ a
smooth time--dependent $(1,1)$ tensor field with self--adjoint
component $S$ and $V:\mo\times\R \rightarrow \R$ a smooth
potential. Assume that $\|S(t)\|$ is bounded along finite times,
$V$ is bounded from below along finite times
and there exists a continuous function $\alpha_0: \R\rightarrow \R$
such that
\[
\left|\frac{\partial V}{\partial t}(p,t)\right|\ \le\ \alpha_0(t)
(V(p,t) - \beta_0(t))\quad \hbox{for all \, $(p,t)\in \mo\times \R$}
\]
with $\beta_0$ as in \eqref{bf1}.

Then, each inextensible solution of equation $(E)$ must be complete.
\end{theorem}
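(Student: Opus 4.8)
The plan is to argue by contradiction using the extension criterion of Lemma \ref{extend}. Suppose some inextensible solution $\gamma$ of $(E)$ is not complete; by the symmetry of the problem under the time reversal $t\mapsto -t$ (which turns $(E)$ into an equation of the same form, with $F$, $V$, $\alpha_0$, $\beta_0$ replaced by $-F(\cdot,-t)$, $V(\cdot,-t)$, $\alpha_0(-t)$, $\beta_0(-t)$, all still satisfying the hypotheses), I may assume that its maximal domain has a finite right endpoint $b>0$, with $0$ in the interior. Fix $T>b$, so that on $[0,b)\subset[0,T]$ the bound $\|S(t)\|\le N_T$ from \eqref{bf} and the constants $\hat\alpha_T:=\max_{[0,T]}|\alpha_0|$, $B_T:=\max_{[0,T]}|\beta_0|$ are available. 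The idea is to control the ``energy''
\[
\mathcal{E}(t)\ :=\ \tfrac12\,g_0\big(\dot\gamma(t),\dot\gamma(t)\big)+V(\gamma(t),t),
\]
which is $C^1$ (unlike $V-\beta_0$), to show it stays bounded on $[0,b)$, and to deduce that $\dot\gamma$ is bounded.

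Differentiating $\mathcal{E}$ along $\gamma$ and using $(E)$ together with the fact that the skew--adjoint part of $F$ is $g_0$--orthogonal to $\dot\gamma$, I get
\[
\mathcal{E}'(t)\ =\ g_0\big(S_{(\gamma(t),t)}\dot\gamma(t),\dot\gamma(t)\big)+\frac{\partial V}{\partial t}(\gamma(t),t),
\]
since the terms $g_0(\nabla^{M_0}V,\dot\gamma)$ cancel through the chain rule. I then estimate the first summand by $N_T\|\dot\gamma\|^2$ and the second, via the main hypothesis and $V-\beta_0\ge 0$, by $\hat\alpha_T\,(V-\beta_0)$. Since $\mathcal{E}-\beta_0=\tfrac12\|\dot\gamma\|^2+(V-\beta_0)$ dominates both $\tfrac12\|\dot\gamma\|^2$ and $V-\beta_0$ (both summands being nonnegative), these bounds combine into the linear differential inequality
\[
\mathcal{E}'(t)\ \le\ (2N_T+\hat\alpha_T)\,\big(\mathcal{E}(t)-\beta_0(t)\big)\ \le\ C_T\,\big(\mathcal{E}(t)+B_T\big),\qquad C_T:=2N_T+\hat\alpha_T,
\]
where I absorb the merely continuous $\beta_0$ into the constant $B_T$, precisely so as not to differentiate it.

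Setting $v(t):=\mathcal{E}(t)+B_T+1$, the bound $\mathcal{E}\ge V\ge\beta_0\ge -B_T$ gives $v\ge 1=:a$, and integrating the previous inequality yields $v(t)\le v(0)+\int_0^t\varphi(v(s))\,ds$ with $\varphi(s):=C_T\,s$ (taking $C_T>0$ without loss of generality). Since $\varphi$ is continuous, monotone increasing and satisfies $\int_a^{+\infty}ds/\varphi(s)=+\infty$, the hypotheses of the Comparison Lemma \ref{comparison} hold with $v_0(t)=v(0)\,e^{C_T t}$, giving $v(t)\le v(0)\,e^{C_T t}$ on $[0,b)$; as $b<+\infty$, both $v$ and hence $\|\dot\gamma\|$ are bounded on $[0,b)$, say $\|\dot\gamma\|\le R$. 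Completeness of $(\mo,g_0)$ now enters through Hopf--Rinow: the bound on $\|\dot\gamma\|$ makes $\gamma([0,b))$ of finite length, hence contained in a compact metric ball, so all the velocities $\dot\gamma(t)$ lie in a fixed compact subset of $T\mo$. Any sequence $t_n\to b^-$ then has $\{\dot\gamma(t_n)\}$ admitting a convergent subsequence, and Lemma \ref{extend} extends $\gamma$ past $b$, contradicting maximality. I expect the main obstacle to be the second step: producing the clean inequality $\mathcal{E}'\le C_T(\mathcal{E}-\beta_0)$ requires correctly decoupling the kinetic and potential contributions and, crucially, handling the term $\partial V/\partial t$, which carries no a priori sign or bound --- this is exactly where the hypothesis $|\partial V/\partial t|\le\alpha_0(V-\beta_0)$ is indispensable --- while keeping the differentiated quantity $C^1$ despite $\beta_0$ being only continuous.
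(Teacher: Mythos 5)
Your proposal is correct and follows essentially the same route as the paper's own argument (given there as the proof of Proposition \ref{G011}, from which Theorem \ref{G01} is deduced): the same energy $\tfrac12 g_0(\dot\gamma,\dot\gamma)+V$ shifted by a constant, the same computation $\mathcal{E}'=g_0(S\dot\gamma,\dot\gamma)+\partial V/\partial t$, the same linear differential inequality fed into the Comparison Lemma \ref{comparison} with $\varphi(s)=C_T s$, and the same conclusion via boundedness of $\dot\gamma$ and Lemma \ref{extend}. The only cosmetic differences are that you shift the energy upward by $B_T+1$ rather than subtracting a $B_T\le\min\beta_0-1$, and that you dispose of backward completeness by time reversal at the outset instead of proving a one-sided proposition first.
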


The proof of Theorem \ref{G01} is a direct consequence
of the following more general result.

\begin{proposition}\label{G011} Let $(\mo,\g)$ be a complete Riemannian manifold, $F$ a
smooth time--dependent $(1,1)$ tensor field with self--adjoint
component $S$ and $V:\mo\times\R \rightarrow \R$ a smooth
potential bounded from below along finite times with $\beta_0$ as in \eqref{bf1}.

\vspace{1mm}

If $S_{\sup}(t)$ is upper bounded along finite times
and a continuous function $\alpha_0: \R\rightarrow \R$ exists
such that
\[
\frac{\partial V}{\partial t}(p,t)\ \le\ \alpha_0(t)
(V(p,t) - \beta_0(t))\quad \hbox{for all \, $(p,t)\in \mo\times
\R$,}
\]
then each inextensible solution of equation $(E)$ must be forward
complete.

\vspace{1mm}

Conversely, if $S_{\inf}(t)$ is lower bounded along finite times
and a continuous function $\alpha_0: \R\rightarrow \R$ exists such
that
\[
- \frac{\partial V}{\partial t}(p,t)\ \le\ \alpha_0(t)
(V(p,t) - \beta_0(t))\quad \hbox{for all \, $(p,t)\in \mo\times
\R$,}
\]
then each inextensible solution of equation $(E)$ must be backward
complete.
\end{proposition}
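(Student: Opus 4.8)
The plan is to reduce forward completeness to a uniform bound on the speed $\|\dot\gamma\|$ over bounded time intervals, and then invoke Lemma \ref{extend} together with the completeness (Hopf--Rinow theorem) of $(\mo,\g)$. Concretely, I argue by contradiction: suppose an inextensible solution $\gamma$ has maximal forward interval $[0,b)$ with $b<+\infty$. If $\|\dot\gamma(t)\|$ stays bounded on $[0,b)$, then $\gamma$ has finite length there, so $\overline{\gamma([0,b))}$ is compact by Hopf--Rinow, and any sequence $\dot\gamma(t_n)$ with $t_n\to b^-$ lies in a compact subset of $T\mo$ (compact base, bounded fibres); extracting a convergent subsequence, Lemma \ref{extend} extends $\gamma$ past $b$, contradicting maximality. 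The backward case will then follow from the forward one by the time reversal $s\mapsto\gamma(-s)$, under which the self--adjoint part of $F$ changes sign (so that $S_{\sup}$ and $S_{\inf}$ exchange roles) and $\partial V/\partial t$ changes sign, turning the backward hypotheses precisely into the forward ones.

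The core step, and the one I expect to be the main obstacle, is a non--autonomous energy estimate. I introduce the energy $E(t):=\tfrac12\g(\dot\gamma,\dot\gamma)+V(\gamma(t),t)$ and the nonnegative quantity $W(t):=E(t)-\beta_0(t)=\tfrac12\|\dot\gamma\|^2+\big(V(\gamma(t),t)-\beta_0(t)\big)\ge 0$, the last inequality by \eqref{bf1}. Differentiating and using equation $(E)$, the $\g$--gradient term $\g(\nabla^{M_0}V,\dot\gamma)$ cancels and one is left with
\[ E'(t)=\g\!\left(S_{(\gamma(t),t)}\dot\gamma,\dot\gamma\right)+\frac{\partial V}{\partial t}(\gamma(t),t), \]
where only the self--adjoint part $S$ survives because the skew--adjoint part of $F$ is $\g$--orthogonal to $\dot\gamma$. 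Now the definition of $S_{\sup}$ gives $\g(S\dot\gamma,\dot\gamma)\le S_{\sup}(t)\|\dot\gamma\|^2$, the upper bound of $S_{\sup}$ along finite times gives $S_{\sup}(t)\le N_b$ on $[0,b]$ (taking $N_b\ge 0$), and the hypothesis on $\partial V/\partial t$ controls the last term by $\alpha_0(t)(V-\beta_0)$. Since $\tfrac12\|\dot\gamma\|^2\le W$ and $0\le V-\beta_0\le W$, while $\alpha_0,\beta_0$ are continuous, hence bounded on the compact interval $[0,b]$, all three contributions are dominated by a constant multiple of $W$:
\[ E'(t)\ \le\ \Big(2N_b+\max_{[0,b]}|\alpha_0|\Big)\,W(t)\ =:\ C\,W(t),\qquad t\in[0,b). \]
The delicate points are that $F$ need be neither self-- nor skew--adjoint, so that only $S$ enters and the one--sided bound on $S_{\sup}$ suffices, and that $\alpha_0,\beta_0$ are merely continuous, which forces me to localise to $[0,b]$ rather than use a global estimate.

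It remains to convert this differential inequality into the speed bound via Lemma \ref{comparison}. Integrating and using $-\beta_0(t)\le-\min_{[0,b]}\beta_0$, I obtain an integral inequality of Gronwall type,
\[ W(t)\ \le\ A+C\int_0^t W(s)\,ds,\qquad A:=E(0)-\min_{[0,b]}\beta_0\ \ge\ 0. \]
Setting $u(t):=A+C\int_0^t W(s)\,ds$, one has $W\le u$, $u(0)=A$ and $u'(t)=C\,W(t)\le C\,u(t)\le\varphi(u(t))$ for the affine choice $\varphi(s):=\hat C(1+s)$ with $\hat C:=\max(C,1)$, defined on $[0,+\infty)$; this $\varphi$ is positive, increasing and satisfies $\int_0^{+\infty}ds/\varphi(s)=+\infty$, so it meets \eqref{et1}. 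Thus $u$ fulfils \eqref{stime} with $a=0$, while the comparison solution $v_0$ of $v_0'=\varphi(v_0)$, $v_0(0)=A$, is defined on all of $[0,+\infty)$ by the first part of Lemma \ref{comparison}. The second part then yields $u(t)\le v_0(t)\le\max_{[0,b]}v_0<+\infty$ on $[0,b)$, whence $\tfrac12\|\dot\gamma(t)\|^2\le W(t)\le u(t)$ is bounded. This is exactly the speed bound needed to close the argument of the first paragraph, proving forward completeness; the time--reversal remark then delivers backward completeness.
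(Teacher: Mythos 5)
Your proposal is correct and follows essentially the same route as the paper: contradiction on $b<+\infty$, the energy $\tfrac12\|\dot\gamma\|^2+V-\beta_0$, the observation that only the self--adjoint part $S$ contributes to $\tfrac{d}{dt}\g(\dot\gamma,\dot\gamma)$, a linear differential/integral inequality closed by Lemma \ref{comparison}, then Lemma \ref{extend} plus completeness of $(\mo,\g)$, and time reversal for the backward case. The only (immaterial) differences are normalizations: the paper shifts by a constant $B_T\le\min\beta_0-1$ so as to apply the comparison lemma with $\varphi(s)=A_T^*s$ and $a=1$, whereas you keep $W=E-\beta_0\ge 0$ and take $\varphi(s)=\hat C(1+s)$ with $a=0$.
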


\begin{proof}
Let $\gamma$ be a non--constant forward inextensible solution of
equation $(E)$ defined on the interval $[0,b) \subset \R$. Arguing
by contradiction, assume that $\gamma$ is not forward complete,
i.e., $b<+\infty$, so a real positive constant $T > b$ can be
fixed so that \eqref{bf} holds for $S_{\sup}(t)$, furthermore
\begin{equation}\label{stima5}
V(p,t) - B_T \ge 1\quad \hbox{and}\quad
\frac{\partial V}{\partial t}(p,t)\ \le\ A_T
(V(p,t) - B_T)
\end{equation}
for all \, $(p,t)\in \mo\times [-T,T]$, with $A_T \ge \max \alpha_0([-T,T])$
and $B_T \le \min\beta_0([-T,T]) -1$.

Now, for simplicity, denote
\[
u(t) = g(\dot\gamma(t),\dot\gamma(t))\;\; \hbox{and}\;\;
v(t)\ =\ \frac12\ u(t) + V(\gamma(t),t) - B_T,
\quad t\in [0,b).
\]
From \eqref{stima5} it follows
\[
u(t)+1\ \le\ 2 v(t),
\]
hence if $v(t)$ is bounded in $[0,b)$ so is $u(t)$, that is
a constant $k > 0$ exists such that
\begin{equation}\label{inequality3}
u(t)\ \le\ k \quad\hbox{for all $t \in [0,b)$.}
\end{equation}
Note that this inequality is enough for contradicting that $b$ is
finite. In fact, \eqref{inequality3} implies that
$\dot\gamma([0,b))$  is bounded in $T\mo$ and, being $(\mo,\g)$
complete, Lemma \ref{extend} is applicable because of the
completeness of $M_0$. Hence, $\gamma$ can be extended to $b$ in
contradiction with its maximality assumption.

In order to prove that $v(t)$ is bounded in $[0,b)$,
taking any $t\in [0,b)$ by using equation $(E)$
and estimates \eqref{bf} and \eqref{stima5} we have
\[\begin{split}
\frac{d v}{dt}(t)\ &=\
g\big(\frac{D\gamma}{dt}(t),\dot\gamma(t)\big)
+ g\left(\nabla^{\mo}V(\gamma(t),t),\dot\gamma(t)\right) + \frac{\partial V}{\partial t}(\gamma(t),t)\\
&=\ g\big(F_{(\gamma(t),t)}\dot \gamma(t),\dot\gamma(t)\big)\, + \,\frac{\partial V}{\partial
t}(\gamma(t),t)\\[1mm]
&=\ g\big(S_{(\gamma(t),t)}\dot \gamma(t),\dot\gamma(t)\big)\, + \,\frac{\partial V}{\partial
t}(\gamma(t),t)\\[1mm]
&\le\ N_T\, u(t) + A_T \big(V(\gamma(t),t)-B_T\big).
\end{split}
\]
Whence, $A_T^* \in \R$ exists such that
\begin{equation}\label{G05}
\frac{d v}{dt}(t)\ \le\ A_T^*\, v(t) \quad\hbox{for all $t \in
[0,b)$.}
\end{equation}
On the other hand, if we consider the linear equation
\begin{equation}\label{G06}
w'(t)\ =\ A_T^*\ w(t),
\end{equation}
let $v_0=v_0(t)$ be the unique (global) solution of \eqref{G06}
satisfying the initial condition $v_0(0) = v(0)$, with $v(0) \ge 1$ from \eqref{stima5}.
Thus, from \eqref{G05} and Lemma \ref{comparison} with $\varphi(s) =  A_T^* s$
and $a=1$, we have that $v(t) \le v_0(t)$ for all $t\in [0,b)$, with $v_0(t)$
bounded in $[0,b]$; whence, $v(t)$ is bounded in $[0,b)$.

\vspace{1mm}

Conversely, let us assume that $\gamma$ is not backward complete
in $(-b,0]$ with $b < +\infty$, then we can consider $T>b$ and
$\tilde \gamma(t) := \gamma(-t)$ in $[0,b)$. From the lower
boundedness of $S_{\inf}(t)$ in $[-T,T]$ and the estimate on
$-\frac{\partial V}{\partial t}$ along finite times, we have
\[
\begin{split}
\frac{dv}{dt}(-t)\ &=\
- g\left(S_{(\gamma(-t),-t)} \dot\gamma(-t),\dot\gamma(-t)\right)
- \frac{\partial V}{\partial t}(\gamma(-t),-t)\\[1mm]
&\le\ N_T\, u(-t) + A_T \big(V(\gamma(-t),-t) - B_T\big),
\end{split}
\]
and we repeat the above argument for $\tilde\gamma(t)$.
\end{proof}

\begin{remark}
Both in Theorem \ref{G01} and in Proposition \ref{G011} the
assumption on the completeness of $(\mo,\g)$ can be replaced by
the condition ``{\sl $V$ is proper}''. In fact, in the above proof
once we have proven that $v(t)$ is bounded in $[0,b)$, the
properness of $V$ implies that $\dot \gamma ([0,b))$ lies in a
compact subset of $T\mo$, so $\gamma$ can be extended to $b$.
\end{remark}

\begin{remark}
As commented in the Introduction, other completeness results on
the inextensible trajectories of equation $(E)$ as well as their
comparison with Theorem \ref{G01} can be found in \cite{CRS2012}.
\end{remark}

\section{Geodesic completeness of GPW}

\subsection{Plane waves and their generalizations}

A {\em parallely propagated wave} spacetime, or a {\em pp--wave} in
brief, is a relativistic spacetime $(\R^4,ds^2)$ where the
Lorentzian metric $ds^2$ has the form
\[
ds^2 \ =\ dx^2+dy^2 + 2dudv + H(x,y,u) du^2,
\]
being $(x,y,u,v)$ the natural coordinates of $\R^4$ and $H : \R^3
\to \R$ a non--zero smooth function. If the expression of $H$ is
quadratic in $x, y$, i.e.,
\begin{equation} \label{ehpw}
H(x,y,u)\ =\ f_1(u) x^2 - f_2(u) y^2 + 2 f(u) xy,
\end{equation}
for some smooth real functions $f_1$, $f_2$ and $f$, then the
spacetime is called {\em  plane wave}, and, in particular, an {\sl
(exact plane fronted) gravitational wave} if $f_1 \equiv f_2$ (for example, see \cite{BEE}).

Since the pioneer papers dealing with gravitational waves
\cite{Br,ER}, these spacetimes have been widely studied by many
authors (see \cite{CFS} and references therein or the summary in
\cite{Yu}) not only for their geometric interest but above all for
their physical interpretation. In fact, as explained in
\cite{MTW}, a gravitational wave represents ripples in the shape
of spacetime which propagate across spacetime, as water waves are
small ripples in the shape of the ocean's surface propagating
across the ocean. The source of a gravitational wave is the motion
of massive particles; in order to be detectable, very massive
objects under violent dynamics must be involved (binary stars,
supernovas, gravitational collapses of stars...). With more
generality, pp--waves may also taken into account the propagation
of non--gravitational effects such as electromagnetism.

Here,  we focus only on the  property of  geodesic
completeness. In particular, we add further information to
the study of the geometric properties for the family of
generalized plane waves, already developed in \cite{CFS,
CRS2012, FS_CQG, FS_JHEP}. The key fact is that the geodesic completeness
of a pp--wave reduces to the completeness of the inextensible
trajectories that are solutions of the second order differential
equation (E$_0$) when $(M_0,g_0)$ is $\R^2$. However, this last
restriction is not important and, following \cite{FS_CQG}, the
classical notion of pp--wave can be generalized as follows:

\begin{definition}\label{pfwave}
{\rm A Lorentzian manifold $(\m,g)$ is called \emph{generalized
plane wave}, briefly \emph{GPW}, if there exists a connected
$n$--dimensional Riemannian manifold $(\mo,\g)$ such that $\m =
\mo \times \R^{2}$ and
\begin{equation}\label{wave}
g\ =\ \g + 2dudv+ \h (x,u) du^{2},
\end{equation}
where $x\in \mo$, the variables $(u,v)$ are the natural
coordinates of $\R^{2}$ and the smooth function $\h: \mo\times
\R\rightarrow \R$ is such that $\h\not\equiv 0 $.}
\end{definition}

\subsection{Application to geodesic completeness}

In order to investigate the properties of geodesics in a GPW, it
is enough studying the behavior of the Riemannian trajectories
under a suitable potential $V$. In particular, the problem of
geodesic completeness is fully reduced to a purely Riemannian
problem: the completeness of the inextensible trajectories of
particles moving under the potential $V(x,u) = - \frac{1}2\,
\h(x,u)$ as the following result shows (see \cite[Theorem
3.2]{CFS} for more details).

\begin{theorem}
\label{traj} A GPW is geodesically complete if and only if
$(\mo,\g) $ is a complete Riemannian manifold and the inextensible
trajectories of
\[
\hspace*{45mm}\frac{D\dot{\gamma}}{dt}\ =\
\frac{1}{2}\,\nabla^{\mo}\h(\gamma(t),t) \hspace*{35mm} (E_0^*)
\]
are complete.
\end{theorem}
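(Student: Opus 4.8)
The plan is to regard a geodesic of the GPW $(\m,g)$ as a curve $\sigma(t)=(\gamma(t),u(t),v(t))\in\mo\times\R^{2}$ and to split the geodesic equation into its components according to the product structure of \eqref{wave}. The key structural remark is that $\partial_v$ is a null \emph{parallel} vector field: since $g$ in \eqref{wave} does not depend on $v$ and the components $g_{\mu v}$ are constant, every Christoffel symbol $\Gamma^{\lambda}_{\mu v}$ vanishes. Consequently $g(\dot\sigma,\partial_v)=\dot u$ is a constant of the motion, say $\dot u\equiv c$, and so is the ``energy'' $g(\dot\sigma,\dot\sigma)\equiv E$. A direct computation of the remaining components then shows that the $\mo$--part of the geodesic equation is
\[
\frac{D\dot\gamma}{dt}\ =\ \frac{c^{2}}{2}\,\nabla^{\mo}\h(\gamma(t),u(t)),
\]
while the $u$--component merely reproduces $\dot u\equiv c$ and the $v$--component can be solved \emph{by a single integration}: when $c\neq0$ energy conservation gives $\dot v=\tfrac{1}{2c}\big(E-\g(\dot\gamma,\dot\gamma)-c^{2}\h(\gamma,u)\big)$, and when $c=0$ the $u$--equation forces $\dot v$ to be constant. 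In either case $v$ is defined on exactly the same interval as $\gamma$ and $u$, so it never creates nor removes incompleteness.

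Next I would separate the two kinematically different types of geodesic. If $c=0$ then $u\equiv u_0$ is constant, the displayed equation reduces to $D\dot\gamma/dt=0$, and $v$ is affine; thus $\sigma$ is complete if and only if the $\mo$--geodesic $\gamma$ is complete, i.e.\ this sector corresponds exactly to the geodesic completeness of $(\mo,\g)$. If $c\neq0$ then $u(t)=ct+u_0$ is an affine diffeomorphism, and reparametrising $\gamma$ by $s=u(t)$ turns the displayed equation into
\[
\frac{D}{ds}\frac{d\gamma}{ds}\ =\ \frac12\,\nabla^{\mo}\h(\gamma(s),s),
\]
which is precisely equation $(E_0^{*})$ with $s$ in the role of the time variable. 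Hence, modulo the affine change of parameter (which preserves completeness) and the harmless coordinate $v$, the $c\neq0$ geodesics are in bijection with the inextensible trajectories of $(E_0^{*})$.

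With these identifications both implications follow. For the ``if'' direction, assume $(\mo,\g)$ is complete and the trajectories of $(E_0^{*})$ are complete; given an inextensible geodesic $\sigma$ with constant $c$, the case $c=0$ is handled by completeness of $(\mo,\g)$ and the case $c\neq0$ by completeness of $(E_0^{*})$, after which $v$ extends by integration, so $\sigma$ is defined on all of $\R$. For the ``only if'' direction I would lift the relevant Riemannian objects to geodesics of $\m$: any $\mo$--geodesic lifts (taking $c=0$ and $v$ affine) and any trajectory of $(E_0^{*})$ lifts (taking $c=1$, $u(t)=t$, $\gamma$ the given trajectory, and $v$ obtained from the energy relation); geodesic completeness of $\m$ then forces the lift, hence its projection, to be defined on all of $\R$, giving completeness of $(\mo,\g)$ and of the trajectories of $(E_0^{*})$ respectively.

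I expect the only delicate points to be bookkeeping rather than conceptual. The first is to check that the coordinate $v$ genuinely never obstructs extension — this is where the structural fact that $\partial_v$ is parallel (so that $\dot v$ is controlled by one quadrature) is essential. The second is to make the correspondence between \emph{inextensible} geodesics and \emph{inextensible} trajectories precise across the affine reparametrisation $s=ct+u_0$ and across the case split $c=0$ versus $c\neq0$, so that maximality of a geodesic segment corresponds exactly to maximality of its projection. Verifying the componentwise form of the geodesic equation from \eqref{wave} is routine but must be done carefully to pin down the factor $c^{2}/2$ that matches $(E_0^{*})$.
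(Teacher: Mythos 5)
Your argument is correct and is essentially the same as the one behind this theorem: the paper itself gives no proof here but delegates to \cite[Theorem 3.2]{CFS}, whose proof proceeds exactly by the decomposition you describe (constancy of $\dot u$ via the parallel null field $\partial_v$, energy conservation to solve for $v$ by quadrature, and reparametrisation by $s=u(t)$ to reduce the $\mo$--component to $(E_0^*)$, with the $c=0$ sector accounting for completeness of $(\mo,\g)$). The only nit is a harmless mislabel: when $c=0$ it is the $v$--component of the geodesic equation (not the $u$--equation) that forces $\dot v$ to be constant.
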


Now, we can use Theorem \ref{G01} to obtain the completeness of
the inextensible trajectories of equation $(E_0^*)$. Then, the
following result on the geodesic completeness on GPW can be
stated:

\begin{corollary}
\label{complete2} Let $\m=\mo\times\R^{2}$ be a GPW such that
$(\mo,\g)$ is a geodesically complete Riemannian manifold and
$\h:\mo\times \R\rightarrow \R$ is a smooth function. If there
exist two continuous functions $\alpha_0$, $\beta_0: \R\rightarrow
\R$ such that
\[
\h(x,u) \ \le\ \beta_0(u)\quad \hbox{and} \quad
\left|\frac{\partial \h}{\partial u}(x,u)\right|\ \le\ \alpha_0(u)
\left(\beta_0(u) - \h(x,u)\right)
\]
for all $(x,u) \in \mo \times\R$, then $(\m,g)$ is geodesically
complete.
\end{corollary}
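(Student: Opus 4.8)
The plan is to reduce Corollary~\ref{complete2} directly to the machinery already assembled, so almost no new computation is required. By Theorem~\ref{traj}, a GPW $(\m,g)$ is geodesically complete if and only if $(\mo,\g)$ is complete and the inextensible trajectories of $(E_0^*)$ are complete. The completeness of $(\mo,\g)$ is assumed, so the whole task is to verify that the trajectories of $(E_0^*)$ are complete. The key observation is that $(E_0^*)$ is precisely equation $(E)$ in the special case $F\equiv 0$ (so that $S\equiv 0$) and with potential $V(x,u) = -\tfrac12\,\h(x,u)$, since then $-\nabla^{\mo}V = \tfrac12\,\nabla^{\mo}\h$. So the strategy is to check that this $V$ satisfies all the hypotheses of Theorem~\ref{G01} and then invoke that theorem.

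First I would dispose of the tensor field: with $F\equiv 0$ we have $S\equiv 0$, so $\|S(t)\|\equiv 0$ is trivially bounded along finite times. Next I would translate the two hypotheses on $\h$ into the required conditions on $V$. The bound $\h(x,u)\le\beta_0(u)$ gives $V(x,u) = -\tfrac12\,\h(x,u)\ge -\tfrac12\,\beta_0(u)$, so $V$ is bounded from below along finite times in the sense of \eqref{bf1}, with the continuous lower bound $-\tfrac12\,\beta_0$ playing the role of the function there (call it $\tilde\beta_0 := -\tfrac12\,\beta_0$). For the derivative condition, note that $\frac{\partial V}{\partial t}(x,u) = -\tfrac12\,\frac{\partial\h}{\partial u}(x,u)$, hence
\[
\left|\frac{\partial V}{\partial t}(x,u)\right|\ =\ \frac12\left|\frac{\partial \h}{\partial u}(x,u)\right|\ \le\ \frac12\,\alpha_0(u)\bigl(\beta_0(u)-\h(x,u)\bigr).
\]
The remaining step is to recognize the right-hand side as $\alpha_0(u)\,\bigl(V(x,u)-\tilde\beta_0(u)\bigr)$: indeed $V(x,u)-\tilde\beta_0(u) = -\tfrac12\h(x,u)+\tfrac12\beta_0(u) = \tfrac12\bigl(\beta_0(u)-\h(x,u)\bigr)$, so the bound reads exactly $\left|\frac{\partial V}{\partial t}\right|\le\alpha_0(u)\bigl(V-\tilde\beta_0\bigr)$, which is the differential inequality required in Theorem~\ref{G01} with the same $\alpha_0$ and with lower bound $\tilde\beta_0$.

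Having matched all hypotheses, Theorem~\ref{G01} applies and guarantees that every inextensible solution of $(E)$ — hence of $(E_0^*)$ — is complete. Combined with the assumed completeness of $(\mo,\g)$, Theorem~\ref{traj} then yields that $(\m,g)$ is geodesically complete, which is the desired conclusion.

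I do not expect any serious obstacle here: the proof is essentially a dictionary translation between the $\h$-language of the corollary and the $V$-language of Theorem~\ref{G01}, the only points requiring care being the sign flip from $V=-\tfrac12\h$ (which turns the upper bound on $\h$ into a lower bound on $V$, and cleanly converts $\beta_0-\h$ into $2(V-\tilde\beta_0)$) and the bookkeeping of the factor $\tfrac12$, which cancels consistently on both sides of the differential inequality. The mildest subtlety worth a sentence is confirming that $(E_0^*)$ really is $(E)$ with $F\equiv0$ and this particular $V$, so that invoking Theorem~\ref{G01} is legitimate.
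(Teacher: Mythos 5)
Your proposal is correct and follows exactly the route the paper intends (the paper states the corollary as an immediate consequence of Theorem \ref{traj} and Theorem \ref{G01} without writing out the details): set $F\equiv 0$, $V=-\tfrac12\h$, observe that $\h\le\beta_0$ gives the lower bound $-\tfrac12\beta_0$ for $V$ and that the derivative estimate transfers with the factor $\tfrac12$ cancelling. All the sign and constant bookkeeping in your write-up checks out.
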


We emphasize that other results on autonomous and non-autonomous
potentials can be translated into results of geodesic completeness
of GPW. So, as a consequence of \cite[Corollary 3.6]{CRS2012} we
have:
\begin{corollary}
\label{complete3} A GPW with complete $(\mo,\g)$  is geodesically
complete if $\nabla^M\h$ grows at most linearly in $M$ along
finite times
\end{corollary}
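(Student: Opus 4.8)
The plan is to reduce the geodesic completeness of the GPW to the completeness of Riemannian trajectories and then to invoke the linear--growth completeness result of \cite{CRS2012}. By Theorem~\ref{traj}, and since $(\mo,\g)$ is assumed complete, it suffices to prove that every inextensible solution of $(E_0^*)$ is complete. The key observation is that $(E_0^*)$ is nothing but equation $(E)$ in the special case $F\equiv 0$ and $V(x,u)=-\tfrac12\,\h(x,u)$, so the whole problem reduces to the completeness of the trajectories of a potential on $(\mo,\g)$.

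First I would translate the hypothesis into a growth condition on $\nabla^{\mo}V$. Since $\nabla^{\mo}V=-\tfrac12\,\nabla^{\mo}\h$, the assumption that $\nabla^{\mo}\h$ grows at most linearly along finite times is equivalent to the statement that, for each $T>0$, there are constants $a_T,b_T\ge 0$ and a base point $x_0\in\mo$ with $\|\nabla^{\mo}V(x,u)\|\le a_T\,d_{\g}(x,x_0)+b_T$ for all $x\in\mo$ and $u\in[-T,T]$, where $d_{\g}$ is the distance of $(\mo,\g)$. Then \cite[Corollary~3.6]{CRS2012} asserts precisely the completeness of the trajectories of $(E)$ with $F\equiv 0$ under such a condition when $(\mo,\g)$ is complete; combining it with Theorem~\ref{traj} finishes the argument.

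If one prefers a self--contained derivation, I would argue by contradiction as in the proof of Proposition~\ref{G011}. Assume a forward inextensible solution $\gamma$ lives only on $[0,b)$ with $b<+\infty$, fix $T>b$, and consider $w(t)=\|\dot\gamma(t)\|+d_{\g}(\gamma(t),x_0)$. From $(E_0^*)$ one gets $\tfrac{d}{dt}\|\dot\gamma\|\le\|\tfrac{D\dot\gamma}{dt}\|=\tfrac12\|\nabla^{\mo}\h(\gamma,t)\|$, while the arc--length estimate gives $\tfrac{d}{dt}d_{\g}(\gamma,x_0)\le\|\dot\gamma\|$; inserting the linear bound then yields $w'(t)\le\varphi(w(t))$ with the affine function $\varphi(s)=(a_T+1)s+b_T$. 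Such a $\varphi$ is increasing and satisfies \eqref{et1}, so the Comparison Lemma~\ref{comparison} bounds $w$, and in particular $\|\dot\gamma\|$, on $[0,b)$. A bounded speed together with the completeness of $(\mo,\g)$ places the velocities $\dot\gamma([0,b))$ in a compact subset of $T\mo$, so Lemma~\ref{extend} extends $\gamma$ past $b$, a contradiction; the backward case follows by reversing time.

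The step I expect to be most delicate is the coupling between the distance and the speed: the force bound involves $d_{\g}(\gamma(t),x_0)$, which is itself controlled only through the integrated speed, so one must organize the two estimates into a single quantity $w$ producing a genuine comparison inequality of the type \eqref{stime}. The linear rather than superlinear growth of $\nabla^{\mo}\h$ is exactly what makes $\varphi$ affine and hence $\int^{+\infty}ds/\varphi(s)$ divergent, which is what blocks finite--time blow--up of the velocity.
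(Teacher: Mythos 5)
Your main argument is exactly the paper's: the corollary is stated there as an immediate consequence of Theorem~\ref{traj} together with \cite[Corollary 3.6]{CRS2012}, which is precisely the reduction you carry out (with $F\equiv 0$ and $V=-\tfrac12\h$). The additional self--contained derivation via the quantity $w(t)=\|\dot\gamma(t)\|+d_{\g}(\gamma(t),x_0)$ and Lemma~\ref{comparison} is sound --- modulo the standard caveat that $w$ is only Lipschitz, so the comparison should be invoked in the integrated form \eqref{stime} rather than through $w'\le\varphi(w)$ pointwise --- but the paper does not include such an argument.
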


\begin{remark}
 The particular case of this corollary for pp--waves (i.e. its
application for $(\mo,\g)=\R^2$) was discussed in \cite{CRS2012},
and it has a clear interpretation: not only classical plane waves
are geodesically complete  but also each pp--wave such that its
coefficient $\h$ behaves qualitatively as the one of a plane wave,
are. This can be understood as a result of stability of the
completeness of plane waves in the class of all pp--waves. So,
Corollary \ref{complete3} also ensures stability of completeness
in the class of generalized plane waves.

Even though the physical interpretation of Corollary
\ref{complete2} is not so clear, it is logically independent of
Corollary \ref{complete3} (a discussion as the one below
Proposition 3.7 in \cite{CRS2012} also holds here). This shows
that the application of the techniques are not exhausted and,
under motivated assumptions,  further results could be obtained.
\end{remark}

\vspace{12mm}

\noindent
Anna Maria Candela\\
Dipartimento di Matematica, \\ Universit\`a degli Studi di Bari ``Aldo Moro'',\\
Via E. Orabona 4, 70125 Bari, Italy\\
e-mail: candela@dm.uniba.it

\vspace{9mm}

\noindent
Alfonso Romero$^\dagger$ and Miguel S\'anchez$^\ddagger$\\
Departamento de Geometr\'{\i}a y Topolog\'{\i}a,\\
Facultad de Ciencias, 
Universidad de Granada,\\
Avenida Fuentenueva s/n,\\
18071 Granada, Spain\\
e-mail addresses: $^\dagger$aromero@ugr.es,
$^\ddagger$sanchezm@ugr.es

\end{document}